\def\R{\mathbb R}
\newtheorem{theorem}{Theorem}[section]
\newtheorem{lemma}[theorem]{Lemma}
\newtheorem{corollary}[theorem]{Corollary}
\newtheorem{proposition}[theorem]{Proposition}
\newtheorem{remark}[theorem]{Remark}
\theoremstyle{definition}
\newtheorem{definition}[theorem]{Definition}
\newcommand{\Ric}{{\rm Ric}}
\newcommand{\Rm}{{\rm Rm}}
\begin{document}

\title
[Complete gradient Einstein-type Sasakian manifolds with $\alpha=0$]
{Complete gradient Einstein-type Sasakian manifolds with $\alpha=0$}

\author{Shun Maeta}
\address{Department of Mathematics, Chiba University, 1-33, Yayoicho, Inage, Chiba, 263-8522, Japan.}
\curraddr{}
\email{shun.maeta@faculty.gs.chiba-u.jp~{\em or}~shun.maeta@gmail.com}
\thanks{The author is partially supported by the Grant-in-Aid for Scientific Research (C), No.23K03107, Japan Society for the Promotion of Science.}
\subjclass[2020]{53C25, 53D10, 53C21, 53C20}

\date{}

\dedicatory{}

\keywords{gradient Einstein-type manifolds, Sasakian manifolds, gradient Yamabe solitons, quasi-Yamabe gradient solitons}

\commby{}

\begin{abstract}
Catino, Mastrolia, Monticelli, and Rigoli have launched an ambitious program to study known geometric solitons from a unified perspective, which they term Einstein-type manifolds. This framework allows one to treat Ricci solitons, Yamabe solitons, and all of their generalizations simultaneously. Einstein-type manifolds are characterized by four constants $\alpha, \beta, \mu$ and $\rho$.
In this paper, we show that when $\alpha = 0$, complete gradient Einstein-type Sasakian manifolds are trivial or isometric to the unit sphere. As a consequence, many geometric solitons on Sasakian manifolds turn out to be trivial or isometric to the unit sphere.

\end{abstract}

\maketitle

\bibliographystyle{amsalpha}


\section{Introduction}\label{intro}

Geometric flows have become one of the most powerful tools for understanding Riemannian manifolds (cf. \cite{Brendle05}, \cite{Brendle07}, \cite{BS08}, \cite{Chow92}, \cite{Perelman02}, \cite{SS03}, \cite{Ye94}). To analyze geometric flows, it is important to study their self-similar solutions, which are known as geometric solitons. 
Geometric solitons have become central to geometry (see \cite{BR13}, \cite{BGV16}, \cite{CSZ12}, \cite{CC12}, \cite{CC13}, \cite{CCCMM14}, \cite{CD08}, \cite{CMM12}, \cite{CMMR17}, \cite{CMM16}, \cite{CMM17}, \cite{Chowetal07}, \cite{CLN06}, \cite{DS13}, \cite{FOW09}, \cite{Hamilton89}, \cite{HL14}, \cite{Maeta21}, \cite{Maeta23}, \cite{Maeta24}, \cite{PBS23}, \cite{Perelman02}). 
There are various types of geometric solitons, including Ricci solitons, Yamabe solitons, $k$-Yamabe solitons, quasi-Yamabe solitons, almost Yamabe solitons, and conformal solitons. 
In particular, it has recently been recognized that quasi-type solitons are highly effective for resolving problems arising in other areas of research (cf. \cite{CMR24}; see also \cite{CL24}).

An ambitious project was initiated by Catino, Mastrolia, Monticelli, and Rigoli (cf. \cite{CMMR17}). They introduced the notion of Einstein-type manifolds, a unifying framework that encompasses previously known geometric solitons and which now appears to clarify why analogous results recur across different soliton classes.

Let $(M,g)$ be a Riemannian manifold. If there exist $X\in\mathfrak{X}(M)$ and $\lambda\in C^\infty(M)$ such that 
\begin{equation*}
\alpha\,\Ric + \frac{\beta}{2} L_X g + \mu\, X^\flat\otimes X^\flat = (\rho R + \lambda) g,
\end{equation*}
for some constants $\alpha,\beta,\mu,\rho\in\mathbb{R}$ with $(\alpha,\beta,\mu)\neq(0,0,0)$, then $(M,g,X)$ is called an {\em Einstein-type manifold}.
Here, $\Ric$ denotes the Ricci tensor of $M$, $L_X$ denotes the Lie derivative along $X$, and $R$ is the scalar curvature of $M$.
If $X=\nabla F$ for some $F\in C^\infty(M)$, then $(M,g,F)$ is called a {\em gradient} Einstein-type manifold:
\begin{equation}\label{getm}
\alpha\,\Ric + \beta\,\nabla\nabla F + \mu\, \nabla F\otimes \nabla F = (\rho R + \lambda) g,
\end{equation}
where $\nabla\nabla F$ is the Hessian of $F$, and $F$ is called the potential function.
If $F$ is constant, then $(M,g,F)$ is called {\em trivial}.

As pointed out in \cite{CMMR17}, gradient Einstein-type manifolds include 
gradient Yamabe solitons, gradient almost Yamabe solitons, gradient $k$-Yamabe solitons, gradient conformal solitons, and quasi-Yamabe gradient solitons (see, for example, \cite{Hamilton82}, \cite{BR13}, \cite{CMM12}, \cite{HL14}).

In this paper, we consider gradient Einstein-type manifolds with $\alpha=0$, which include all the classes above. 
In \cite{CMMR17}, Catino, Mastrolia, Monticelli, and Rigoli provided a structure theorem for complete $n$-dimensional $(n\ge 3)$ gradient Einstein-type manifolds with $\alpha=0$ (see Theorem~1.4 in \cite{CMMR17}). 
Theorem~1.4 of \cite{CMMR17} generalizes results in \cite{CSZ12}, \cite{CMM12}, \cite{Tashiro65}. 

As is well known, Sasakian manifolds provide the odd-dimensional counterpart of K\"ahler geometry via their K\"ahler cones and Reeb foliations. 
In particular, Sasaki-Einstein manifolds appear prominently in high-energy physics and string theory, for example as internal spaces in supersymmetric AdS/CFT compactifications, where they model the geometric structure underlying certain superconformal field theories (see, for example, \cite{BG08}, \cite{Sparks11}, \cite{GMSW05}). 

Therefore, in this paper, we consider complete gradient Einstein-type Sasakian manifolds with $\alpha=0$. 
Interestingly, we can show that they are trivial or isometric to the unit sphere.

\begin{theorem}\label{main}
Let $(M^{2n+1},\phi,\xi,\eta,g,F)$ be a complete gradient Einstein-type Sasakian manifold with $\alpha=0$.
Then exactly one of the following statements holds.
\begin{enumerate}
\item[{\rm (i)}]
$\beta=0$. The manifold $(M,g)$ is trivial.
\item[{\rm (ii)}]
$\beta\neq0$.
Either $(M,g)$ is trivial, or $(M,g)$ is isometric to the unit sphere $(I\times \mathbb{S}^{2n},dr^2+\sin^2r\,g_{\mathbb{S}^{2n}})$ where $I$ is a compact interval and $g_{\mathbb{S}^{2n}}$ is the Riemannian metric of the unit sphere $\mathbb{S}^{2n}$. 
In the nontrivial case, there exists a nonconstant smooth function $v$ on $M$ such that
\begin{equation*}
\nabla\nabla v=-vg,
\end{equation*}
and $v$ has exactly two critical points. In particular, $v(r)=a\cos r$ for some constant $a>0$.

Moreover, if $c:=-\frac{\mu}{\beta}=0$ then we can take $v=F-k$ for some constant $k$, hence
\[
F(r)=k+a\cos r.
\]
If $c\neq0$ then we can take $v=e^{-cF}-k$ for some constant $k$, hence
\[
F(r)=-\frac{1}{c}\ln(k+a\cos r)
\qquad (k>a).
\]
\end{enumerate}
\end{theorem}

We now give an outline of the proof.
From \eqref{getm} with $\alpha=0$, we rewrite the equation as
\[
\beta \nabla\nabla F + \mu\, \nabla F\otimes  \nabla F = \psi\, g
\qquad ((\beta,\mu)\neq(0,0))
\]
for some $\psi\in C^\infty(M)$.
Since the case $\beta=0$ can be proved relatively easily, we only describe the case $\beta\neq 0$.
Dividing by $\beta$ and setting $c=-\mu/\beta$, we obtain
\[
\nabla\nabla F - c\, \nabla F\otimes  \nabla F = \psi\, g.
\]
Differentiating yields a curvature identity for $\Rm(X,Y)\nabla F$, where the Riemannian curvature tensor $\Rm$ is defined by
$\Rm(X,Y)Z=\nabla _X\nabla_YZ-\nabla _Y\nabla_XZ-\nabla _{[X,Y]}Z,~~(X,Y,Z\in\frak{X}(M)).$ 
Tracing this identity yields a formula for $\Ric(\phi X,\phi\nabla F)$. On the other hand, the Sasakian condition gives another formula for $\Ric(\phi X,\phi\nabla F)$. 
Using standard Sasakian identities (in particular $\nabla_X\xi=-\phi X$), we obtain the formula
\begin{equation*}
\begin{aligned}
\nabla \psi=(c\psi-1)\nabla F.
\end{aligned}
\end{equation*}
By the formula, we obtain the Obata-type equation
\[
\nabla \nabla v=-vg.
\]
By Tashiro's theorem, we obtain that $M$ is isometric to the unit sphere.


\section{Preliminaries}
In this section, we present the definitions needed to introduce Sasakian manifolds. After defining Sasakian manifolds, we recall standard consequences and identities. For details, see \cite{YK84}, \cite{BG08}.

\begin{definition}[Almost contact manifold]
An \emph{almost contact structure} on a smooth $(2n+1)$-dimensional manifold $M$ is a triple $(\phi,\xi,\eta)$ with $\phi\in \Gamma(T^1_1 M)$, $\xi\in\mathfrak{X}(M)$, and $\eta\in\Omega^1(M)$ such that
\begin{equation*}
\eta(\xi)=1,\qquad \phi^2=-I+\eta\otimes \xi.
\end{equation*}
A manifold endowed with such a triple is called an \emph{almost contact manifold}.
\end{definition}

\begin{remark}\label{rem1}
For any almost contact manifold $(M^{2n+1},\phi,\xi,\eta)$, the following identities hold:
\[
\phi\,\xi=0,~\eta(\phi X)=0 \quad\text{for any vector field}~X.
\]
\end{remark}

\begin{definition}[Almost contact metric manifolds]
An \emph{almost contact metric structure} on $M^{2n+1}$ is a quadruple $(\phi,\xi,\eta,g)$ where $(\phi,\xi,\eta)$ is an almost contact structure and $g$ is a Riemannian metric satisfying
\begin{equation}\label{eq:acmetric}
\eta(X)=g(X,\xi),\qquad g(\phi X,\phi Y)=g(X,Y)-\eta(X)\eta(Y).
\end{equation}
A manifold endowed with such a quadruple is called an \emph{almost contact metric manifold}.
The associated \emph{fundamental $2$-form} is $\Phi(X,Y):=g(X,\phi Y)$.
\end{definition}


\begin{definition}[Contact metric manifold]
A \emph{contact metric manifold} is an almost contact metric manifold $(M^{2n+1},\phi,\xi,\eta,g)$ such that
\begin{equation*}
d\eta=\Phi.
\end{equation*}
\end{definition}

\begin{definition}[Nijenhuis tensor]
Given an almost contact manifold $(M^{2n+1},\phi,\xi,\eta)$, define an almost complex structure $J$ on $M\times \R$ by
\begin{equation*}
J\bigl(X,f\,\tfrac{\partial}{\partial t}\bigr):=\bigl(\phi X-f\,\xi,\ \eta(X)\tfrac{\partial}{\partial t}\bigr).
\end{equation*}
The \emph{Nijenhuis tensor} of $J$ is the $(1,2)$-tensor $N_J$ defined by
\begin{equation*}
N_J(U,V)=[JU,JV]-J[JU,V]-J[U,JV]+J^2[U,V],
\end{equation*}
for vector fields $U,V$. 
The structure $(\phi,\xi,\eta)$ is called \emph{normal} if $N_J\equiv0$.
\end{definition}

\begin{definition}[Sasakian manifold]
A \emph{Sasakian manifold} is a contact metric manifold $(M^{2n+1},\phi,\xi,\eta,g)$ that is normal.
\end{definition}

We will use the following known results later.

\begin{theorem}\label{thm:sasaki-char}
Let $(M^{2n+1},\phi,\xi,\eta,g)$ be an almost contact metric manifold. Then $M$ is Sasakian if and only if
\begin{equation}\label{Sasakirici}
(\nabla_X\phi)Y=g(X,Y)\,\xi-\eta(Y)\,X \quad \text{for any vector fields}~X,Y.
\end{equation}
\end{theorem}

\begin{lemma}\label{lem:ricci-sasaki}
If $(M^{2n+1},\phi,\xi,\eta,g)$ is a Sasakian manifold, then
\begin{align*}
\Ric(X,\xi)&=2n\,\eta(X),\\
\Ric(\phi X,\phi Y)&=\Ric(X,Y)-2n\,\eta(X)\eta(Y).
\end{align*}
\end{lemma}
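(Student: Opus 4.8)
\emph{Proof proposal.} The plan is to reduce both identities to two infinitesimal facts about the Reeb field $\xi$, namely $\nabla_X\xi=-\phi X$ and $R(X,Y)\xi=\eta(Y)X-\eta(X)Y$. First I would establish $\nabla_X\xi=-\phi X$ from Theorem~\ref{thm:sasaki-char}: putting $Y=\xi$ in $(\nabla_X\phi)Y=g(X,Y)\xi-\eta(Y)X$ and using $\phi\xi=0$ gives $\phi(\nabla_X\xi)=X-\eta(X)\xi$; applying $\phi$ again, together with $\phi^2=-I+\eta\otimes\xi$ and $g(\nabla_X\xi,\xi)=\tfrac12 X(g(\xi,\xi))=0$, yields $\nabla_X\xi=-\phi X$. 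I record here the consequence, crucial at the end, that $\xi$ is a Killing field: since $\phi$ is skew-adjoint for $g$ (which follows from \eqref{eq:acmetric}), $(L_\xi g)(X,Y)=-g(\phi X,Y)-g(X,\phi Y)=0$.

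Next I would compute $R(X,Y)\xi$. Expanding $R(X,Y)\xi=\nabla_X\nabla_Y\xi-\nabla_Y\nabla_X\xi-\nabla_{[X,Y]}\xi$ with $\nabla\xi=-\phi$ and collecting the connection terms gives $R(X,Y)\xi=-(\nabla_X\phi)Y+(\nabla_Y\phi)X$, and Theorem~\ref{thm:sasaki-char} collapses this to $R(X,Y)\xi=\eta(Y)X-\eta(X)Y$. The first identity is then a one-line contraction over an orthonormal frame $\{e_i\}$: using $R(e_i,X)\xi=\eta(X)e_i-\eta(e_i)X$ and $\sum_i\eta(e_i)g(X,e_i)=\eta(X)$, one gets $\Ric(X,\xi)=\sum_i g(R(e_i,X)\xi,e_i)=(2n+1)\eta(X)-\eta(X)=2n\,\eta(X)$.

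The second identity is the heart of the matter and the step I expect to be the main obstacle. Writing $\Ric=g(Q\cdot,\cdot)$, the first identity reads $Q\xi=2n\xi$, and then, via $g(\phi X,\phi Y)=g(X,Y)-\eta(X)\eta(Y)$ from \eqref{eq:acmetric}, the claimed identity is \emph{equivalent} to the commutation $Q\phi=\phi Q$, i.e.\ to $\Ric(\phi X,Y)+\Ric(X,\phi Y)=0$. The difficulty is that $Q\phi=\phi Q$ cannot be obtained by tracing a single pointwise curvature identity: commuting covariant derivatives of $\phi$ and substituting Theorem~\ref{thm:sasaki-char} produces $R(X,Y)\phi Z-\phi R(X,Y)Z=g(X,Z)\phi Y-g(Y,Z)\phi X+g(\phi X,Z)Y-g(\phi Y,Z)X$, but every natural trace of this identity turns out to be tautological.

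To break the tautology I would bring in the \emph{global} fact that $\xi$ is Killing. Differentiating $R(X,Y)\xi=\eta(Y)X-\eta(X)Y$ and using $\nabla\xi=-\phi$ together with the displayed curvature identity gives $(\nabla_W R)(X,Y)\xi=\phi R(X,Y)W+g(X,W)\phi Y-g(Y,W)\phi X$. Setting $X=W=e_i$ and summing yields $\sum_i(\nabla_{e_i}R)(e_i,Y)\xi=-\phi QY+2n\,\phi Y$. On the other hand, the contracted second Bianchi identity evaluates the same trace through $(\nabla_Z\Ric)(\xi,Y)$ and $(\nabla_\xi\Ric)(Z,Y)$, which I would simplify using $Q\xi=2n\xi$, $\nabla\xi=-\phi$, and $L_\xi\Ric=0$ (itself a consequence of $\xi$ being Killing). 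Matching the two evaluations forces $\Ric(Y,\phi Z)+\Ric(\phi Y,Z)=0$, i.e.\ $Q\phi=\phi Q$, from which the stated identity follows algebraically. Thus the genuine obstacle is conceptual rather than computational: the pointwise curvature algebra of a Sasakian structure is not enough, and the Killing property of $\xi$---entering through the second Bianchi identity---is what ultimately enforces the commutation.
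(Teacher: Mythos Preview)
The paper does not give its own proof of Lemma~\ref{lem:ricci-sasaki}; it records the two identities as standard facts for Sasakian manifolds, with a blanket reference to \cite{YK84} and \cite{BG08}. So there is no ``paper's proof'' to compare against---your task was really to supply an argument the paper omits.

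Your argument is correct. The derivation of $\nabla_X\xi=-\phi X$ from Theorem~\ref{thm:sasaki-char}, the computation $R(X,Y)\xi=\eta(Y)X-\eta(X)Y$, and the trace giving $\Ric(X,\xi)=2n\,\eta(X)$ are exactly the standard steps. For the second identity your reduction to $Q\phi=\phi Q$ (via $Q\xi=2n\xi$ and \eqref{eq:acmetric}) is clean, and the route you take---differentiating $R(X,Y)\xi=\eta(Y)X-\eta(X)Y$, combining with the commutator identity $R(X,Y)\phi Z-\phi R(X,Y)Z=g(X,Z)\phi Y-g(Y,Z)\phi X+g(\phi X,Z)Y-g(\phi Y,Z)X$, and then matching the trace against the contracted second Bianchi identity using $L_\xi\Ric=0$---does close: one finds $-\phi QY+2n\,\phi Y = 2n\,\phi Y - Q\phi Y$, hence $Q\phi=\phi Q$. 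The only remark I would make is about your framing of the ``obstacle'': the classical references you would be replacing (e.g.\ \cite{YK84}) obtain $Q\phi=\phi Q$ by purely pointwise curvature algebra, using the first Bianchi identity together with the $\phi$--curvature commutator, without invoking the differential (second) Bianchi identity or $L_\xi\Ric=0$. So your claim that ``every natural trace \dots\ is tautological'' is a bit too strong; what is true is that a \emph{single} trace of the commutator identity does not suffice, and one needs to combine two contractions via first Bianchi. Your Killing/second-Bianchi argument is a perfectly valid alternative, just slightly heavier machinery than strictly necessary.
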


\begin{definition}[$K$-contact]
A contact metric manifold $(M^{2n+1},\phi,\xi,\eta,g)$ is called \emph{$K$-contact} if and only if $\xi$ is a Killing vector field.
\end{definition}

\begin{proposition}\label{prop:Kcontact}
On a contact metric manifold $(M^{2n+1},\phi,\xi,\eta,g)$, the following are equivalent.
\begin{enumerate}
\item
The structure is $K$-contact.
\item
$\nabla_X\xi=-\,\phi X$ holds for any vector field $X$.
\end{enumerate}
\end{proposition}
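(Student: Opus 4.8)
The plan is to derive a single identity for the covariant derivative $\nabla_X\xi$ that simultaneously encodes both conditions, so that the equivalence drops out at once rather than as two separate implications. Writing $A(X,Y):=g(\nabla_X\xi,Y)$, I will show that the symmetric part of $A$ is exactly $\tfrac12 L_\xi g$, while its antisymmetric part is forced by the contact condition to equal $\Phi$. Combining these gives the master identity $g(\nabla_X\xi,Y)=\tfrac12(L_\xi g)(X,Y)+g(X,\phi Y)$, from which statements (1) and (2) are manifestly equivalent. Note that only the contact metric structure is used here, not normality, so the argument is intrinsic to contact metric manifolds.

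First I would record two preliminary facts. Since $\eta(X)=g(X,\xi)$, metric compatibility of $\nabla$ gives $(\nabla_X\eta)(Y)=g(Y,\nabla_X\xi)=A(X,Y)$, because the $g(\nabla_X Y,\xi)$ terms cancel. Second, polarizing the compatibility relation $g(\phi X,\phi Y)=g(X,Y)-\eta(X)\eta(Y)$ (replace $Y$ by $\phi Y$ and use $\phi^2=-I+\eta\otimes\xi$ together with $\eta\circ\phi=0$ and $g(\phi X,\xi)=\eta(\phi X)=0$ from Remark~\ref{rem1}) yields $g(\phi X,Y)=-g(X,\phi Y)$; hence $\phi$ is $g$-skew-adjoint and $\Phi(X,Y)=g(X,\phi Y)$ is genuinely a $2$-form.

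Next I would split $A$ into its symmetric and antisymmetric parts. The symmetric part is $\tfrac12\bigl(A(X,Y)+A(Y,X)\bigr)=\tfrac12\bigl(g(\nabla_X\xi,Y)+g(\nabla_Y\xi,X)\bigr)=\tfrac12(L_\xi g)(X,Y)$. The antisymmetric part is computed from $(\nabla_X\eta)(Y)=A(X,Y)$ via the standard formula $d\eta(X,Y)=\tfrac12\bigl((\nabla_X\eta)(Y)-(\nabla_Y\eta)(X)\bigr)$ (valid because $\nabla$ is torsion-free), so it equals $d\eta(X,Y)$, which by the hypothesis $d\eta=\Phi$ equals $g(X,\phi Y)$. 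Adding the two parts produces
\begin{equation*}
g(\nabla_X\xi,Y)=\tfrac12(L_\xi g)(X,Y)+g(X,\phi Y)=\tfrac12(L_\xi g)(X,Y)-g(\phi X,Y),
\end{equation*}
where the last equality uses the skew-adjointness of $\phi$.

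Finally, the equivalence follows immediately. If $\xi$ is Killing then $L_\xi g=0$, so $g(\nabla_X\xi,Y)=-g(\phi X,Y)$ for all $Y$, giving $\nabla_X\xi=-\phi X$; conversely, if $\nabla_X\xi=-\phi X$ then the identity forces $(L_\xi g)(X,Y)=0$ for all $X,Y$, so $\xi$ is Killing. The only delicate point is the bookkeeping with the normalization convention for the exterior derivative used in \cite{YK84,BG08} (the factor $\tfrac12$ in $d\eta$), which is exactly what makes the contact condition $d\eta=\Phi$ reproduce $\nabla_X\xi=-\phi X$ with the correct coefficient; any other normalization would merely rescale $\Phi$. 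I expect no genuine obstacle beyond keeping these conventions consistent.
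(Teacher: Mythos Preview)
The paper does not actually prove Proposition~\ref{prop:Kcontact}; it is quoted as a standard fact from \cite{YK84,BG08} under the heading ``We will use the following known results later,'' so there is no in-paper argument to compare against.

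Your argument is correct and is essentially the textbook proof. The decomposition $g(\nabla_X\xi,Y)=\tfrac12(L_\xi g)(X,Y)+d\eta(X,Y)$ holds on any Riemannian manifold for any vector field $\xi$ with dual $1$-form $\eta$, and the contact metric hypothesis $d\eta=\Phi$ together with the skew-adjointness of $\phi$ then gives the master identity from which the equivalence is immediate. Your remark about the $\tfrac12$ convention in $d\eta$ is well placed: this is exactly the convention used in \cite{YK84}, and it is what makes $d\eta=\Phi$ produce $\nabla_X\xi=-\phi X$ without an extra factor. There is no gap.
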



\section{Proof of Theorem \ref{main}}

In this section, we show Theorem \ref{main}.

\begin{proof}[Proof of Theorem~\ref{main}]

Since $\lambda$ is a smooth function, the equation $\eqref{getm}$ can be reduced to
\begin{equation}\label{getm0}
\beta \nabla \nabla F+\mu \nabla F \otimes\nabla F =\psi g,
\end{equation}
for some $\psi\in C^\infty(M)$ and constants $(\beta,\mu)\not=(0,0)$.
If $\beta\not=0$, without loss of generality, \eqref{getm0} can be written as
\[
\nabla \nabla F-c\nabla F\otimes\nabla F=\psi g,
\]
for some $c\in\mathbb{R}$.
If $\beta=0$, without loss of generality, \eqref{getm0} can be written as
\[
\nabla F\otimes\nabla F=\psi g.
\]

\noindent
Case 1: $\beta\not=0$.

Since $M$ is a gradient Einstein-type manifold with $\alpha=0$, we have
\begin{equation}\label{getm1}
\nabla_X\nabla F=\psi X+cg(X,\nabla F)\nabla F.
\end{equation}
By the equation, we have
\begin{equation}\label{Rm}
\begin{aligned}
\Rm(X,Y)\nabla F
&=\nabla _X\nabla_Y\nabla F-\nabla_Y\nabla_X\nabla F-\nabla_{[X,Y]}\nabla F\\
&=\nabla _{X}(\psi Y+cg(Y,\nabla F)\nabla F)\\
&\quad-\nabla _{Y}(\psi X+cg(X,\nabla F)\nabla F)\\
&\quad -\{\psi [X,Y]+cg([X,Y],\nabla F)\nabla F\}\\
&=(X\psi)Y+\psi\nabla_XY
+cg(\nabla_XY,\nabla F)\nabla F\\
&\quad +cg(Y,\nabla_X\nabla F)\nabla F
+cg(Y,\nabla F)\nabla_X\nabla F\\
&\quad -\{
(Y\psi)X+\psi\nabla_YX
+cg(\nabla_YX,\nabla F)\nabla F\\
&\quad +cg(X,\nabla_Y\nabla F)\nabla F
+cg(X,\nabla F)\nabla_Y\nabla F
\}\\
&\quad -\psi [X,Y]-cg([X,Y],\nabla F)\nabla F\\
&=(X\psi)Y
+cg(Y,\psi X+cg(X,\nabla F)\nabla F)\nabla F\\
&\quad +cg(Y,\nabla F)\{\psi X+cg(X,\nabla F)\nabla F\}\\
&\quad 
-\{
(Y\psi)X
+cg(X,\psi Y+cg(Y,\nabla F)\nabla F)\nabla F\\
&\quad +cg(X,\nabla F)\{\psi Y+cg(Y,\nabla F)\nabla F\}
\}\\
&=(X\psi-c\psi XF)Y-(Y\psi-c\psi YF)X.
\end{aligned}
\end{equation}
By taking the trace, one has
\begin{equation}\label{Ric}
\Ric(Y,\nabla F)=g(Y,-2n(\nabla\psi-c\psi\nabla F)).
\end{equation}

Since $M$ is a Sasakian manifold, it follows from \eqref{Sasakirici} (see page 277 in \cite{YK84}) that
\begin{equation*}
\begin{aligned}
\Rm(X,Y)\phi\nabla F
&=\phi (\Rm(X,Y)\nabla F)\\
&\quad -\eta(\nabla_Y\nabla F)X+\eta(\nabla_X\nabla F)Y\\
&\quad +g(Y,\nabla F)\nabla _X\xi-(X(\eta(\nabla F)))Y\\
&\quad -\{g(X,\nabla F)\nabla _Y\xi-(Y(\eta(\nabla F)))X\}.
\end{aligned}
\end{equation*}
By taking the trace, one has
\begin{equation}\label{Ricphi}
\begin{aligned}
\Ric(Y,\phi \nabla F)
&=
g(\phi(\Rm(e_i,Y)\nabla F),e_i)\\
&\quad+g(-\eta(\nabla_Y\nabla F)e_i+\eta(\nabla_{e_i}\nabla F)Y,e_i)\\
&\quad +g(Y,\nabla F)g(\nabla _{e_i}\xi,e_i)
-(e_i(\eta(\nabla F)))g(Y,e_i)\\
&\quad-\{g(e_i,\nabla F)g(\nabla_Y\xi,e_i)
-(2n+1)Y(\eta(\nabla F))\}\\
&=
g(Y,-\phi(\nabla \psi-c\psi\nabla F))\\
&\quad -2n\{\psi\eta(Y)+cg(Y,\nabla F)\eta(\nabla F)\}\\
&\quad+g(Y,\nabla F)g(\nabla _{e_i}\xi,e_i)
-(e_i(\eta(\nabla F)))g(Y,e_i)\\
&\quad-\{g(e_i,\nabla F)g(\nabla_Y\xi,e_i)
-(2n+1)Y(\eta(\nabla F))\},
\end{aligned}
\end{equation}
where $\{e_i\}$ is an orthonormal frame and the last equality follows from \eqref{getm1} and \eqref{Rm}.
Take $Y=\phi X$. By Lemma \ref{lem:ricci-sasaki}, one has
\[
\begin{aligned}
\Ric(\phi X,\phi \nabla F)
&=\Ric(X,\nabla F)-2n\eta(X)\eta(\nabla F)\\
&=
g(X,-2n(\nabla \psi-c\psi\nabla F))-2n\eta(X)\eta(\nabla F),
\end{aligned}
\]
where the last equality follows from \eqref{Ric}.
On the other hand,
\[
\begin{aligned}
\Ric(\phi X,\phi \nabla F)
&=g(\phi X,-\phi(\nabla \psi-c\psi\nabla F))\\
&\quad -2n\{\psi\eta(\phi X)+cg(\phi X,\nabla F)\eta(\nabla F)\}\\
&\quad+g(\phi X,\nabla F)g(\nabla _{e_i}\xi,e_i)
-(e_i(\eta(\nabla F)))g(\phi X,e_i)\\
&\quad-\{g(e_i,\nabla F)g(\nabla_{\phi X}\xi,e_i)
-(2n+1)(\phi X)(\eta(\nabla F))\}\\
&=g(X,-(\nabla \psi-c\psi\nabla F))-\eta(X)\eta(-(\nabla \psi-c\psi\nabla F))\\
&\quad -2ncg(\phi X,\nabla F)\eta(\nabla F)\\
&\quad+g(\phi X,\nabla F)g(\nabla _{e_i}\xi,e_i)
-(e_i(\eta(\nabla F)))g(\phi X,e_i)\\
&\quad-\{g(e_i,\nabla F)g(\nabla_{\phi X}\xi,e_i)
-(2n+1)(\phi X)(\eta(\nabla F))\},
\end{aligned}
\]
where the last equality follows from \eqref{eq:acmetric}.
Combining the above two equations, we have
\begin{equation}\label{key}
\begin{aligned}
&g(X,-(2n-1)(\nabla \psi-c\psi\nabla F))
-2n\eta(X)\eta(\nabla F)
-\eta(X)\eta(\nabla \psi-c\psi\nabla F)\\
& -g(\phi X,\nabla F)g(\nabla _{e_i}\xi,e_i)
+(e_i(\eta(\nabla F)))g(\phi X,e_i)\\
&+g(e_i,\nabla F)g(\nabla _{\phi X}\xi,e_i)
-(2n+1)(\phi(X))(\eta(\nabla F))
+2ncg(\phi X,\nabla F)\eta(\nabla F)=0.
\end{aligned}
\end{equation}

We consider the left-hand side of \eqref{key}.

We consider the second and third terms of the left-hand side of \eqref{key}.
By \eqref{eq:acmetric}, we have
\[
-2n\eta(X)\eta(\nabla F)
-\eta(X)\eta(\nabla \psi-c\psi\nabla F)
=g(X,-2n\eta(\nabla F)\xi-\eta(\nabla \psi-c\psi\nabla F)\xi).
\]

We consider the fourth term of the left-hand side of \eqref{key}.
It is known that a Sasakian manifold is a K-contact manifold (cf. \cite{YK84}). Hence, we have
\[
\nabla _W\xi=-\phi W,
\]
for any vector field $W$ on $M$.
Therefore, the fourth term of the left-hand side of $\eqref{key}$ vanishes. In fact,
\begin{equation}\label{vanishei}
g(\nabla_{e_i}\xi,e_i)
=g(-\phi e_i,e_i)=0.
\end{equation}

We consider the fifth term of the left-hand side of $\eqref{key}$.

\[
\begin{aligned}
(e_i(\eta(\nabla F)))g(\phi X,e_i)
&= g(\phi X,\nabla (\eta(\nabla F)))\\
&= g(X,-\phi\nabla (\eta(\nabla F))).
\end{aligned}
\]

We consider the sixth term of the left-hand side of $\eqref{key}$.
By Proposition \ref{prop:Kcontact}, we have
\[
\nabla _{\phi X}\xi=-\phi(\phi X)=-\phi^2X=-(-X+\eta(X)\xi)=X-\eta(X)\xi.
\]
Hence, we have
\[
\begin{aligned}
g(e_i,\nabla F)g(\nabla _{\phi X}\xi,e_i)
&=
g(e_i,\nabla F)g(X-\eta(X)\xi,e_i)\\
&=
g(X-\eta(X)\xi,\nabla F)\\
&
=g(X,\nabla F-\eta(\nabla F)\xi).
\end{aligned}
\]
We consider the seventh term of the left-hand side of \eqref{key}.
\[
-(2n+1)(\phi(X))(\eta(\nabla F))
=
g(\phi X, -(2n+1)\nabla (\eta(\nabla F)))
=
g(X, (2n+1)\phi(\nabla (\eta(\nabla F)))).
\]
We consider the eighth term of the left-hand side of \eqref{key}.
\[
2ncg(\phi X,\nabla F)\eta(\nabla F)=-2ncg( X, \phi \nabla F)\eta(\nabla F)=g(X,-2nc\eta(\nabla F) \phi \nabla F).
\]
Combining these equations with \eqref{key}, we have
\begin{equation*}
\begin{aligned}
&g(X,-(2n-1)(\nabla \psi-c\psi\nabla F))
+g(X,-2n\eta(\nabla F)\xi-\eta(\nabla \psi-c\psi\nabla F)\xi)\\
&+g(X,-\phi\nabla (\eta(\nabla F)))
+g(X,\nabla F-\eta(\nabla F)\xi)\\
&+g(X, (2n+1)\phi(\nabla (\eta(\nabla F))))
+g(X,-2nc\eta(\nabla F) \phi \nabla F)=0.
\end{aligned}
\end{equation*}
Since $X$ is arbitrary, one has
\begin{equation}\label{key1}
\begin{aligned}
&-(2n-1)(\nabla \psi-c\psi\nabla F)
-2n\eta(\nabla F)\xi-\eta(\nabla \psi-c\psi\nabla F)\xi\\
&+2n\phi(\nabla (\eta(\nabla F)))
+\nabla F-\eta(\nabla F)\xi
-2nc\eta(\nabla F) \phi \nabla F
=0.
\end{aligned}
\end{equation}
We consider $\nabla (\eta(\nabla F))$.
\begin{equation}\label{nenf}
\begin{aligned}
\nabla (\eta(\nabla F))
&=e_ig(\nabla F,\xi)e_i\\
&=g(\nabla_{e_i}\nabla F,\xi)e_i+g(\nabla F,\nabla_{e_i}\xi)e_i\\
&=\psi \xi +c\eta(\nabla F)\nabla F+\phi\nabla F,
\end{aligned}
\end{equation}
where the last equality follows from \eqref{getm1} and Proposition \ref{prop:Kcontact}.
Hence, we have
\[
\phi(\nabla (\eta(\nabla F)))=c\eta(\nabla F)\phi\nabla F-\nabla F+\eta(\nabla F)\xi.
\]
Substituting this into \eqref{key1}, we have
\begin{equation}\label{rev1}
\begin{aligned}
-(2n-1)\{\nabla \psi-c\psi\nabla F+\nabla F\}
-\eta(\nabla \psi-c\psi\nabla F+\nabla F)\xi=0.
\end{aligned}
\end{equation}
Hence, one has
\[
\begin{aligned}
0&=
g(-(2n-1)\{\nabla \psi-c\psi\nabla F+\nabla F\}
-\eta(\nabla \psi-c\psi\nabla F+\nabla F)\xi,\xi)\\
&=-2n\eta(\nabla \psi-c\psi\nabla F+\nabla F),
\end{aligned}
\]
where we used \eqref{eq:acmetric}. Substituting this into \eqref{rev1}, we have
\begin{equation}\label{keykey}
\nabla \psi=(c\psi-1)\nabla F.
\end{equation}

If $c=0$, then \eqref{keykey} gives $\nabla(\psi+F)=0$. Hence, we have $\psi=k-F$ for some constant $k$.
Substituting this into \eqref{getm1}, we obtain $\nabla\nabla F=(k-F)g$.
Let $v:=F-k$. Then, we obtain 
\[
\nabla\nabla v=-vg.
\]

If $c\neq0$, set $u:=e^{-cF}$.
By \eqref{getm1}, we have
\[
\nabla_X\nabla u
=\nabla_X(-cu\nabla F)
=-cX(u)\nabla F-cu\nabla_X\nabla F
=-cu\psi X.
\]
Set $\tau:=-c\psi u$. By using \eqref{keykey} and $\nabla u=-cu\nabla F$, we have
\[
\nabla\tau
=-c(u\nabla\psi+\psi\nabla u)
=-c\left(u(c\psi-1)\nabla F-c\psi u\nabla F\right)
=cu\nabla F
=-\nabla u.
\]
Hence $\tau+u=k$ for some constant $k$, and therefore $\nabla\nabla u=(k-u)g$.
Let $v:=u-k$. Then, we obtain 
\[
\nabla\nabla v=-vg.
\]
Therefore, in both cases $c=0$ and $c\not=0$, we obtain 
\begin{equation}\label{obata}
\nabla\nabla v=-vg.
\end{equation}
By Theorem 2 in \cite{Tashiro65} and the proof of Theorem 5.7.5 in \cite{PetersenRG}, we have that $(M,g)$ is isometric to the unit sphere
$(I\times \mathbb{S}^{2n},dr^2+\sin^2r\,g_{\mathbb{S}^{2n}})$ where $I$ is a compact interval and $g_{\mathbb{S}^{2n}}$ is the Riemannian metric of the unit sphere $\mathbb{S}^{2n}$.
Furthermore, by the proof of Theorem 5.7.5 in \cite{PetersenRG}, we have
$v=a\cos r$. By the above argument, if $c=-\frac{\mu}{\beta}=0$ then we have 
\[
F(r)=k+a\cos r.
\]
If $c\neq0$ then we have 
\[
F(r)=-\frac{1}{c}\ln(k+a\cos r)
\qquad (k>a).
\]

\noindent
Case 2: $\beta=0$.
 
In this case, the equation for a gradient Einstein-type manifold is as follows.
\begin{equation}\label{beta}
\nabla F\otimes\nabla F=\psi g.
\end{equation}
By taking the trace, we have
\begin{equation}\label{betat}
|\nabla F|^2=(2n+1)\psi.
\end{equation}
Assume that $\nabla F(p)\not=0$ at some point $p\in M$.
Since $\dim M\geq2$, one can take a unit vector $e$ such that $g(\nabla F,e)=0$ at $p$. By \eqref{beta}, we have
\[
0=(g(\nabla F,e))^2=\psi(p).
\]
Hence, one has 
\[
\psi(p)=0.
\]
However, by \eqref{betat}, we have
\[
|\nabla F|^2(p)=(2n+1)\psi(p)=0,
\]
which is a contradiction.
Therefore, $F$ is constant and $M$ is trivial.
\end{proof}

In Case 2 of the proof of the theorem, we did not use the assumption that $M$ is a Sasakian manifold. Therefore, we also have the following.

\begin{corollary}
Any $m$-dimensional $(m\geq2)$ gradient Einstein-type manifold with $\alpha=\beta=0$ is trivial.
\end{corollary}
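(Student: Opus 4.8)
The plan is to split into the cases $\beta\neq0$ and $\beta=0$ as the reduced equation \eqref{getm0} dictates, and in each case extract a contradiction from having $\nabla F\neq0$ somewhere. For the main case $\beta\neq0$ the normalization gives $\nabla_X\nabla F=\varphi X+c\,g(X,\nabla F)\nabla F$, i.e.\ $\nabla F$ is a \emph{concircular-type} vector field up to the quadratic correction; the strategy is to commute covariant derivatives to compute $\Rm(X,Y)\nabla F$, which turns out to depend only on $\varphi$ and $F$ (no curvature survives), and then to feed this into the two Sasakian Ricci identities of Lemma~\ref{lem:ricci-sasaki} together with the $K$-contact identity $\nabla_W\xi=-\phi W$ from Proposition~\ref{prop:Kcontact}.

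Concretely, first I would derive the Ricci contraction \eqref{Ric} of the curvature formula, then compute $\Rm(X,Y)\phi\nabla F$ using $(\nabla_X\phi)Y=g(X,Y)\xi-\eta(Y)X$ (Theorem~\ref{thm:sasaki-char}) to relate it to $\phi\,\Rm(X,Y)\nabla F$ plus $\xi$-terms. Tracing this against $e_i$ and also evaluating $\Ric(\phi X,\phi\nabla F)$ two ways via Lemma~\ref{lem:ricci-sasaki} produces the key pointwise identity \eqref{key}; simplifying each term with $\nabla_{e_i}\xi=-\phi e_i$ and $\nabla_{\phi X}\xi=X-\eta(X)\xi$ yields the vector identity \eqref{key1}. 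The remaining work is to pin down the scalar $\eta(\nabla F)$: contracting \eqref{Ric} with $\xi$ gives $\eta(\nabla\varphi)=(c\varphi-1)\eta(\nabla F)$, while plugging $Y=\xi$ into the simplified $\Ric(\cdot,\phi\nabla F)$ identity forces $\xi(\eta(\nabla F))=0$. Computing $\nabla(\eta(\nabla F))$ directly from \eqref{getm1} and the Sasakian structure, then pairing with $\xi$, gives $\varphi=-c\,(\eta(\nabla F))^2$; differentiating this along $\xi$ and comparing with $\eta(\nabla\varphi)=(c\varphi-1)\eta(\nabla F)$ forces $\eta(\nabla F)=0$ (since $c^2(\eta(\nabla F))^2+1>0$), hence $\varphi=0$, and then \eqref{key1} collapses to $\nabla F=0$.

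The case $\beta=0$ is elementary: the equation reads $\nabla F\otimes\nabla F=\varphi g$, and taking the trace gives $|\nabla F|^2=(2n+1)\varphi$; if $\nabla F(p)\neq0$, choose a unit vector $e\perp\nabla F$ at $p$ (possible since $\dim M\geq2$) to get $0=g(\nabla F,e)^2=\varphi(p)$, whence $|\nabla F|^2(p)=0$, a contradiction. This argument uses nothing about the Sasakian structure, so it immediately yields the stated corollary for all $n\geq2$ gradient Einstein-type manifolds with $\alpha=\beta=0$.

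The main obstacle is the bookkeeping in the $\beta\neq0$ case: one must be careful that the curvature term genuinely cancels in $\Rm(X,Y)\nabla F$ (so that all subsequent identities are purely about $\varphi$, $F$, and the contact data), and one must track the $\xi$-components cleanly through the two computations of $\Ric(\phi X,\phi\nabla F)$ so that \eqref{key} is exact. Once \eqref{key1} and the scalar relations \eqref{env}, \eqref{xenf}, \eqref{varphi} are in hand, the conclusion $\eta(\nabla F)=0\Rightarrow\varphi=0\Rightarrow\nabla F=0$ is forced algebraically, with no further geometric input needed.
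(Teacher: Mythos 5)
Your argument for the corollary is exactly the paper's: the case $\alpha=\beta=0$ reduces to $\nabla F\otimes\nabla F=\varphi g$, and the trace together with a unit vector orthogonal to $\nabla F$ (available since $\dim M\geq 2$) forces $\varphi=0$ and hence $\nabla F=0$, with no use of the Sasakian structure. The only cosmetic point is that for a general $n$-dimensional manifold the trace reads $|\nabla F|^2=n\varphi$ rather than $(2n+1)\varphi$, which changes nothing; the lengthy $\beta\neq0$ discussion is not needed for this corollary.
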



\noindent
\textbf{Acknowledgements.}~~
The author is partially supported by the Grant-in-Aid for Scientific Research (C), No.23K03107, Japan Society for the Promotion of Science.

\noindent
\textbf{Data availability statement.}~~Data sharing is not applicable to this article as no datasets were generated or analysed during the current study.

\noindent
{\bf Conflict of interest.}~~
There is no conflict of interest in the manuscript.

\bibliographystyle{amsbook}

\end{document}